\newcommand{\qed}{\hfill $\square$\\
\vspace{0.1cm}}
\newcommand{\B}{\mathcal{B}}
\newcommand{\F}{\mathcal{F}}
\newcommand{\I}{\mathcal I}
\newcommand{\Pa}{\mathcal{P}}
\newcommand{\V}{\mathcal{V}}
\newcommand{\floor}[1]{\lfloor{ #1}\rfloor}
\newcommand{\pa}{\operatorname{Pa}}
\newcommand{\pas}{\operatorname{Pa}^*}
\newcommand{\abs}[1]{\left| #1 \right|}
\newtheorem{theorem}{Theorem}[section]
\newtheorem{lemma}[theorem]{Lemma}
\newtheorem{proposition}[theorem]{Proposition}
\newtheorem{conjecture}{Conjecture}[section]
\newenvironment{proof}{\noindent{\em Proof.}}{\qed}
\newcommand{\nchn}{\dbinom{n}{\lfloor \frac{n}{2}\rfloor}}
\newcommand{\comments}[1]{}
\begin{document}
\title{Packing Posets in the Boolean Lattice}

\author{Andrew P. Dove
\thanks{Department of Mathematics, University of South Carolina,
Columbia, SC, USA 29208 (doveap@mailbox.sc.edu).}
\and Jerrold R. Griggs
\thanks{Department of Mathematics, University of South Carolina,
Columbia, SC, USA 29208 (griggs@math.sc.edu).
Research supported in part by a grant from the Simons Foundation (\#282896 to Jerrold Griggs).} }
\date{September 15, 2013\\}

\maketitle

\begin{abstract}
We are interested in maximizing the number of pairwise unrelated copies of a poset $P$ in the family of all subsets of $[n]$.
For instance, Sperner showed that when $P$ is one element, $\nchn$ is the maximum number of copies of $P$.
Griggs, Stahl, and Trotter have shown that when $P$ is a chain on $k$ elements, $\dfrac{1}{2^{k-1}}\nchn$ is asymptotically the maximum number of copies of $P$.
We prove that for any $P$ the maximum number of unrelated copies of $P$ is asymptotic to a constant times $\nchn$.
Moreover, the constant has the form $\dfrac{1}{c(P)}$, where $c(P)$ is the size of the smallest convex closure over all embeddings of $P$ into the Boolean lattice.
\end{abstract}



\section{Introduction}\label{sec:Intro}

Using standard notation, let $\B_n$ be the inclusion poset of all subsets of $[n]=\{1,2,\dots,n\}$. Let $P$ be any poset.
Let $f:P\to\B_n$ be a weak embedding of the poset $P$ into $\B_n$, i.e., if $a<b\in P$, then $f(a)\subset f(b)$.
We call $f(P)$ a copy of $P$ in $\B_n$. Let $\{\F_i\}$ be pairwise unrelated copies of $P$, i.e., if $A_i\in\F_i$, $A_j\in\F_j$, and $i\ne j$, then $A_i$ and $A_j$ are unrelated.
We say the family $\F=\cup_i\F_i$ is a family constructed from pairwise unrelated copies of $P$.
Let $\pa(n,P)$ denote the maximum size of a family constructed from pairwise unrelated copies of $P$ in $\B_n$.
This quantity can be generalized to apply to a collection of posets; let $\pa(n,\{P_i\})$ denote the maximum size of a family in $\B_n$ constructed from pairwise unrelated copies of posets chosen from the collection of posets $\{P_i\}$.

The motivation for finding $\pa(n,\{P_i\})$ comes from a question under intensive study in recent years, that of finding the maximum size $\operatorname{La}(n,Q)$, which is the maximum size of a family $\F\subseteq\B_n$ that contains no copy of poset $Q$.
This seems to be a challenging problem in extremal set theory, even determining the asymptotic growth of $\operatorname{La}(n,Q)$, as $n\rightarrow \infty$, for posets as simple as the four element diamond (which is $\B_2$).
For a survey on the topic, see \cite{GriLiLu}.
For the most recent progress on the diamond, see \cite{KraMarYou}.

It is natural to extend this notion to collections of posets $\{Q_j\}$, seeking to find the maximum size $\operatorname{La}(n,\{Q_j\})$ of a family $\F\in\B_n$ that contains no copy of any poset $Q_j$ in the collection.
We noticed that for the collection $\{\V,\Lambda\}$, where $\V=\V_2$ is the poset on $\{a,b,c\}$ with $a<b$ and $a<c$, and $\Lambda$ is the poset on $\{a,b,c\}$ with $a>b$ and $a>c$, $\operatorname{La}(n,\{\V,\Lambda\})$ is the same as $\pa(n,\{\B_0,\B_1\})$, since any collection of subsets that contains no copy of $\V$ or $\Lambda$ has components consisting only of single sets and/or two-element chains, all unrelated to each other.
We recently learned that Katona and Tarj\'an \cite{KatTar} solved this very problem years ago, showing that $\operatorname{La}(n,\{\V,\Lambda\})=2\dbinom{n-1}{\lfloor\frac{n-1}{2}\rfloor}$.
We were able to derive the same result, applying a 1984 result of Griggs, Stahl, and Trotter \cite{GriStaTro} that gives $\pa(n,\B_1)$; we present their more general result for the path $\Pa_k$ below.

More generally, for any collection $\{Q_j\}$, $\operatorname{La}(n,\{Q_j\})$ is equivalent to $\pa(n,\{P_i\})$, where $\{P_i\}$ is the collection of all possible connected posets that do not contain any of the posets in $\{Q_j\}$ as a subposet.
Note that the collection $\{P_i\}$ may be infinite.
For instance, $\operatorname{La}(n,\V)$ is the same as $\pa(n,\{P_i\})$ where $P_i$ is the $i$-fork consisting of one set that contains $i$ (unrelated) sets, $i\ge0$.
So the problem of determining $\pa(n,\{P_i\})$ can be viewed as more general than the $\operatorname{La}(n,\{Q_j\})$ problem.

In this paper we now concentrate on finding the asymptotic behavior of $\pa(n,P)$ for single posets $P$.
We hope that these ideas might help with solving the more difficult problem of finding $\operatorname{La}(n,\{Q_j\})$.
To start us off, here are some examples in the literature of finding $\pa(n,P)$ for specific $P$.
A natural technique in proving results on posets is to count full chains.
We define a \emph{full chain} in $\B_n$ to be a chain of $\B_n$ that includes a subset of $[n]$ of every size.
In Sperner's classic theorem (1928), he finds $\pa(n,P)$ for $P=\B_0$.

\begin{theorem}[\cite{Spe}]
$\pa(n,\B_0)$ is $\nchn$.
\end{theorem}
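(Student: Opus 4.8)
The plan is first to unwind the definitions. A copy of $\B_0$ in $\B_n$ is just a single subset of $[n]$, since a one-element poset has no relations and hence every function into $\B_n$ is a weak embedding; and a family built from pairwise unrelated such copies is exactly an antichain of $\B_n$. So $\pa(n,\B_0)$ is the largest size of an antichain in $\B_n$, and the statement is Sperner's theorem. For the lower bound I would exhibit the middle layer: the family of all $\floor{n/2}$-element subsets of $[n]$ is an antichain of size $\nchn$, so $\pa(n,\B_0)\ge\nchn$.

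For the upper bound I would use the full-chain counting technique highlighted above. Let $\F$ be an arbitrary antichain, and count pairs $(C,A)$ with $C$ a full chain of $\B_n$ and $A\in\F$ lying on $C$. There are $n!$ full chains in total, and each full chain, being itself a chain, contains at most one member of $\F$, so the number of such pairs is at most $n!$. On the other hand, a fixed set $A$ with $|A|=k$ lies on exactly $k!\,(n-k)!$ full chains: order the elements of $A$ to build up to $A$, then order the remaining $n-k$ elements. Double counting then yields
\[
\sum_{A\in\F}|A|!\,(n-|A|)!\le n!,\qquad\text{equivalently}\qquad \sum_{A\in\F}\frac{1}{\binom{n}{|A|}}\le 1,
\]
which is the LYM inequality. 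Since $\binom{n}{k}\le\nchn$ for all $k$, every term on the left is at least $1/\nchn$, so $|\F|\le\nchn$, completing the proof.

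There is no genuine obstacle here — the result is classical — but the steps that actually carry the argument are the chain count $k!\,(n-k)!$ through a fixed $k$-set and the elementary fact that $\binom{n}{k}$ is maximized at $k=\floor{n/2}$; everything else is bookkeeping. An alternative route would be a symmetric chain decomposition of $\B_n$ into $\nchn$ chains, each of which meets an antichain at most once; but since the rest of the paper leans on counting full chains, the Lubell-style argument above is the natural one to record.
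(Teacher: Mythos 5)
Your proof is correct and uses essentially the same Lubell-style full-chain counting argument as the paper, with the middle-level construction for the lower bound; the only cosmetic difference is that you phrase the count as a double count through the LYM inequality rather than directly bounding the chain count per set by $\lfloor n/2\rfloor!\lceil n/2\rceil!$.
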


We include a proof of Sperner's Theorem to demonstrate a basic outline for our later proofs.
Here is a proof introduced by Lubell \cite{Lub}:

\begin{proof}
Each copy of $\B_0$ is just a subset of $[n]$.
If a copy of $\B_0$ is a subset of size $a$, then it meets $a!(n-a)!$ full chains in $\B_n$;
this is at least $\lfloor n/2\rfloor!\lceil n/2\rceil!$ full chains.
There are only $n!$ full chains in $\B_n$. No chain may hit more than one copy of $\B_0$.
The number of copies of $\B_0$ is $\dfrac{\pa(n,\B_0)}{\abs{\B_0}}$.
Counting the full chains gives the inequality
$$\frac{\pa(n,\B_0)}{\abs{\B_0}}\Big(\lfloor n/2\rfloor!\lceil n/2\rceil!\Big)\leq n!,\ \text{and hence}$$
$$\pa(n,\B_0)\leq\abs{\B_0}\dfrac{n!}{\lfloor n/2\rfloor!\lceil n/2\rceil!}=\abs{\B_0}\nchn=\nchn.$$
This bound is tight, as the copies of $\B_0$ may be chosen to be the middle level of $\B_n$.
\end{proof}

This proof was generalized by Griggs, Stahl, and Trotter (1984) for $P=\Pa_k$, the chain (or path) on $k+1$ elements.  The earlier paper of Bollob\'as~\cite{Bol} also implies this result.

\begin{theorem}[\cite{GriStaTro}]
\label{chain:theorem}
$\pa(n,\Pa_k)$ is $(k+1)\dbinom{n-k}{\lfloor\frac{n-k}{2}\rfloor}\sim\dfrac{k+1}{2^{k}}\nchn$.
\end{theorem}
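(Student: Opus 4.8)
The plan is to pin down the exact value $\pa(n,\Pa_k)=(k+1)\dbinom{n-k}{\floor{\frac{n-k}{2}}}$ and then read off the asymptotic form; the equivalence $(k+1)\dbinom{n-k}{\floor{\frac{n-k}{2}}}\sim\frac{k+1}{2^{k}}\nchn$ is a one-line Stirling estimate once the exact count is in hand, since $\dbinom{m}{\floor{m/2}}\sim\sqrt{2/(\pi m)}\,2^{m}$. The first move is a structural reduction. A copy of $\Pa_k$ is just a $(k+1)$-element chain of $\B_n$, and a family $\F$ built from pairwise unrelated copies of $\Pa_k$ is a disjoint union of such chains $C_1,\dots,C_t$ that are pairwise unrelated: no two copies can share a set, since a set is comparable to itself. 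Hence $\abs{\F}=(k+1)t$, and it suffices to show that the maximum number $t^{\ast}$ of pairwise unrelated $(k+1)$-chains in $\B_n$ equals $\dbinom{n-k}{\floor{\frac{n-k}{2}}}$.

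For the upper bound I would \emph{not} count full chains as in Lubell's proof of Sperner's theorem above: because every copy of $\Pa_k$ spreads over at least $k+1$ levels, that estimate loses a factor of roughly $2^{k}$. Instead the right tool is Bollob\'as's set-pair inequality \cite{Bol}: if $(A_i,B_i)_{i=1}^{t}$ are pairs of finite sets with $A_i\cap B_i=\emptyset$ for all $i$ and $A_i\cap B_j\neq\emptyset$ for all $i\neq j$, then $\sum_{i=1}^{t}\dbinom{\abs{A_i}+\abs{B_i}}{\abs{A_i}}^{-1}\le 1$. Given pairwise unrelated $(k+1)$-chains $C_i=\{x_i^{0}\subsetneq x_i^{1}\subsetneq\cdots\subsetneq x_i^{k}\}$, set $A_i:=x_i^{0}$ and $B_i:=[n]\setminus x_i^{k}$. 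Then $A_i\cap B_i=\emptyset$ because $x_i^{0}\subseteq x_i^{k}$, and $\abs{A_i}+\abs{B_i}=\abs{x_i^{0}}+n-\abs{x_i^{k}}\le n-k$ because the chain $x_i^{0}\subsetneq\cdots\subsetneq x_i^{k}$ forces $\abs{x_i^{k}}\ge\abs{x_i^{0}}+k$. Finally, for $i\neq j$ the sets $x_i^{0}$ and $x_j^{k}$ are unrelated, hence $x_i^{0}\not\subseteq x_j^{k}$, i.e.\ $A_i\cap B_j=x_i^{0}\setminus x_j^{k}\neq\emptyset$. So Bollob\'as's inequality applies; and since $\dbinom{p}{q}\le\dbinom{n-k}{\floor{\frac{n-k}{2}}}$ whenever $0\le q\le p\le n-k$, every summand is at least $\dbinom{n-k}{\floor{\frac{n-k}{2}}}^{-1}$, giving $t^{\ast}\le\dbinom{n-k}{\floor{\frac{n-k}{2}}}$.

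For the matching lower bound, let $S$ range over the $\dbinom{n-k}{\floor{\frac{n-k}{2}}}$ subsets of $\{k+1,\dots,n\}$ of size $\floor{\frac{n-k}{2}}$, and to each such $S$ attach the chain $S\subsetneq S\cup\{1\}\subsetneq S\cup\{1,2\}\subsetneq\cdots\subsetneq S\cup\{1,\dots,k\}$. Every set in the $S$-chain has trace $S$ on $\{k+1,\dots,n\}$, so if $S\neq S'$ then, since $\abs{S}=\abs{S'}$, no set of the $S$-chain can be contained in or contain any set of the $S'$-chain; thus these chains are pairwise unrelated, and they are visibly disjoint, for a total of $(k+1)\dbinom{n-k}{\floor{\frac{n-k}{2}}}$ sets. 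Combined with the upper bound this gives the exact value, and hence the asymptotics.

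The step I expect to be the crux is the passage from ``unrelated $(k+1)$-chains in $\B_n$'' to a Bollob\'as set-pair configuration: one has to notice that only the two extreme sets $x_i^{0}$ and $x_i^{k}$ of each chain matter, that complementing the top converts the incomparabilities into exactly the cross-intersections Bollob\'as requires, and that the slack $\abs{x_i^{k}}-\abs{x_i^{0}}\ge k$ is precisely what upgrades $\nchn$ to $\dbinom{n-k}{\floor{\frac{n-k}{2}}}$. (An alternative is the original Griggs--Stahl--Trotter argument, which instead pushes a full-chain count through a compression to the canonical configuration above; I would present the Bollob\'as route, which is shorter and makes the $2^{k}$ saving transparent.)
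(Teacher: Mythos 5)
Your proof is correct, and the appeal to Bollob\'as's set-pair inequality is a legitimate and clean alternative to the paper's argument; indeed the paper itself remarks that Bollob\'as~\cite{Bol} ``also implies this result.'' But it is worth noticing that the two routes are not as far apart as your commentary suggests. You say you would avoid counting full chains because ``that estimate loses a factor of roughly $2^k$,'' but that loss only occurs if you count full chains passing through \emph{individual sets} of each copy. The paper's proof counts full chains passing through the \emph{interval} $\I_\Pa=[A,B]$ spanned by the minimum $A$ and maximum $B$ of each chain copy: such a full chain corresponds to a permutation of $[n]$ placing all of $A$ before all of $[n]\setminus B$, and there are exactly $n!/\binom{|A|+(n-|B|)}{|A|}=n!/\binom{n-|B\setminus A|}{|A|}$ of these. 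Since unrelated copies have unrelated intervals, no full chain hits two intervals, and summing over the copies gives $\sum_i \binom{|A_i|+(n-|B_i|)}{|A_i|}^{-1}\le 1$ --- which is exactly the Bollob\'as inequality applied to your set pairs $(A_i,[n]\setminus B_i)$. In other words, the paper's full-chain count \emph{is} the standard LYM-style proof of the relevant instance of Bollob\'as's inequality, so the factor $2^k$ is recovered for free; your reduction to set pairs packages the same computation as a citation. Your lower-bound construction is the paper's construction (with the anchor set $S$ specialized to $\{1,\dots,k\}$ and a specific full chain inside $[A,A\cup S]$), so that part is essentially identical. What your write-up buys is a shorter exposition (one cited inequality instead of a direct count) and a transparent record of where the slack $|x_i^k|-|x_i^0|\ge k$ enters; what the paper's self-contained version buys is that the same counting template then generalizes to closures of arbitrary posets, which is the engine of Proposition~\ref{thm:amn} and Theorem~\ref{main:one}.
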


\begin{proof}
For a chain $\Pa$, with a minimum set $A\subseteq[n]$ and maximum $B\subseteq [n]$, define $\I_\Pa$ as $[A,B]$, the interval from $A$ to $B$.
For two chains $\Pa$ and $\Pa^\prime$ to be pairwise unrelated copies of $\Pa_k$, a full chain in $\B_n$ hits at most one of $\I_{\Pa}$ or $\I_{\Pa^\prime}$.
A full chain that hits $\I_\Pa$ is a chain constructed from all the elements of $A$ before any of the elements of $[n]\setminus B$.
There are $n-\abs{B\setminus A}$ elements in $A$ or not in $B$.
Therefore, the number of chains that hit $\I_\Pa$ is
$$\dfrac{n!}{\dbinom{n-\abs{B\setminus A}}{\abs A}}\geq \dfrac{n!}{\dbinom{n-k}{\lfloor\frac{n-k}{2}\rfloor}}$$
so each of the $\dfrac{\pa(n,\Pa_k)}{\abs{\Pa_k}}$ intervals from the copies of $\Pa_k$ meets at least $\dfrac{n!}{\dbinom{n-k}{\lfloor\frac{n-k}{2}\rfloor}}$ full chains.
This gives that
$$\frac{\pa(n,\Pa_k)}{\abs{\Pa_k}}\dfrac{n!}{\dbinom{n-k}{\lfloor\frac{n-k}{2}\rfloor}}\leq n!,\ \text{or}$$
$$\pa(n,\Pa_k)\leq\abs{\Pa_k}\dbinom{n-k}{\lfloor\frac{n-k}{2}\rfloor}=(k+1)\dbinom{n-k}{\lfloor\frac{n-k}{2}\rfloor}\sim\dfrac{k+1}{2^{k}}\nchn.$$
The bound is tight, as the following construction demonstrates.
Fix a set $S\subseteq[n]$ such that $\abs S=k$.
The $A$'s corresponding to the chains are all the $\lfloor\frac{n-k}{2}\rfloor$-sets of $[n]\setminus S$, and the $\Pa_k$'s are chosen as  any full chain in the interval $[A,A\cup S]$ for each $A$.
\end{proof}

Notice the steps in the proof above.
For the upper bound, first, each copy of $\Pa_k$ is contained in a larger set system $\I_\Pa$, where a full chain hits at most one $\I_\Pa$.
Second, a lower bound on the number of full chains that hit an $\I_\Pa$ is found.
Now $n!$ divided by this lower bound is an upper bound on $\dfrac{\pa(n,P)}{\abs{P}}$, the number of copies of $P$.
As for the lower bound, a construction is found.
In this particular example, the construction is multiple copies of $\Pa_k$, where each copy's minimum is on a base rank $\lfloor\frac{n-k}{2}\rfloor$, and the minimum is constructed from just the elements of $[n]\setminus S$.
The set $S$ restricts the choices of which sets in the base level to include in the packing.
Each copy of $P$ can then easily be built on top of its minimum using the elements of $S$.

\comments{
\begin{lemma}
\label{asymptotic:lemma}
For constants $a$ and $b$, $\binom{n-a}{\lfloor n/2\rfloor-b}\sim\frac{1}{2^a}\nchn$.
\end{lemma}

\begin{proof}
\begin{align*}
\binom{n-a}{\lfloor n/2\rfloor-b}&=\bigg(\frac{(n-a)!}{(\lfloor n/2\rfloor-b)!(\lceil n/2\rceil-(a-b))!}\bigg)\bigg(\frac{(\lfloor n/2\rfloor)!(\lceil n/2\rceil)!}{n!}\bigg)\nchn
\\&=\bigg(\frac{(n-a)!}{n!}\bigg)\bigg(\frac{(\lfloor n/2\rfloor)!}{(\lfloor n/2\rfloor-b)!}\bigg)\bigg(\frac{(\lceil n/2\rceil)!}{(\lceil n/2\rceil-(a-b))!}\bigg)\nchn
\\&\sim n^{-a}(n/2)^b(n/2)^{a-b}\nchn
\\&=2^{-a}\nchn.
\end{align*}
\end{proof}
}

A similar method may be used to find $\pa(n,P)$ for general $P$.
One important concept is how each copy of $\Pa_k$ is contained in a larger set system $\I_\Pa$, where a full chain hits at most one $\I_\Pa$.
Similarly, we contain a copy of $P$ inside the convex closure of that copy.
The convex closure of a set system is defined as follows:
Let $\F\subseteq \B_n$. In $\B_n$, $\F$ generates an ideal (or down-set) and a filter (or up-set) denoted as follows:
$$D(\F)=\{S\in\B_n|S\subseteq A\ \text{for some}\ A\in \F\},\ \text{and}$$
$$U(\F)=\{S\in\B_n|A\subseteq S\ \text{for some}\ A\in \F\}.$$
We define a closure operator on $\F$ as $\overline{\F}:=D(\F)\cap U(\F)$. Another definition would be
$$\overline \F:=\{S\in\B_n| A\subseteq S\subseteq B\ \text{for some } A,B\in \F \}.$$
Here $S$, $A$, and $B$ could be equal, so clearly $\F\subseteq\overline \F$.
A family $\F$ such that $\F=\overline\F$ is called convex.
Note that convex families appear in the literature, including the conjecture by P. Frankl and J. Akiyama:
\begin{conjecture}
[\cite{AkiFra}]
For every convex family $\F\subseteq \B_n$, there exists an antichain $\mathcal A\subseteq\F$ such that $\abs{\mathcal A}/\abs\F\geq\nchn/2^n.$
\end{conjecture}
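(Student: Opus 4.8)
The plan is to construct the antichain $\A\subseteq\F$ directly; Dilworth duality does not help here, since lower-bounding the minimum chain cover of $\F$ is, again by Dilworth, equivalent to producing the antichain we are after. Fix a symmetric chain decomposition $\mathcal C_1,\dots,\mathcal C_m$ of $\B_n$ with $m=\nchn$, so $\sum_i|\mathcal C_i|=2^n$ and every $\mathcal C_i$ contains a set of size $\lfloor n/2\rfloor$. The one place convexity enters cleanly is this: $\F\cap\mathcal C_i$ is a consecutive subchain of $\mathcal C_i$, because $A\subseteq C\subseteq B$ with $A,B\in\F$ forces $C\in\F$. Thus the nonempty pieces $\F\cap\mathcal C_i$ form a chain partition of $\F$ into intervals. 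When $\F=\B_n$ these pieces are exactly the $\mathcal C_i$, and picking the rank-$\lfloor n/2\rfloor$ set of each gives an $m$-element antichain of density $\nchn/2^n$; this recovers the extremal case and shows the constant cannot be improved.

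For general $\F$ I would induct on $n$. Split $\F=\F_0\sqcup(\F_1+\{n\})$, where $\F_0=\{S\in\F:n\notin S\}$ and $\F_1=\{S\setminus\{n\}:S\in\F,\ n\in S\}$; a direct check from the definition shows $\F_0$ and $\F_1$ are convex in $\B_{n-1}$. By the inductive hypothesis there are antichains $\A_0\subseteq\F_0$ and $\A_1\subseteq\F_1$ with $|\A_t|\geq\binom{n-1}{\lfloor(n-1)/2\rfloor}|\F_t|/2^{n-1}$ for $t=0,1$. Since $2\binom{n-1}{\lfloor(n-1)/2\rfloor}\geq\nchn$ (with equality exactly when $n$ is even) and $|\F_0|+|\F_1|=|\F|$, we would have $|\A_0|+|\A_1|\geq\nchn|\F|/2^n$, so we would be done if $\A_0\cup(\A_1+\{n\})$ were always an antichain of $\F$. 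It need not be: a set $B\in\A_0$ with $B\subseteq A$ for some $A\in\A_1$ gives $B\subsetneq A\cup\{n\}$. The repair is to discard from $\A_0$ the sets lying below $\A_1$ (or to choose $\A_0$ and $\A_1$ coherently at a common rank), and to show, using the convexity of $\F_0$ and $\F_1$, that almost nothing is lost.

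That repair is the main obstacle, and it is where the full strength of the conjecture sits. The trouble is turning an \emph{averaged} bound --- the inequality $\sum_{S\in\F}1/\binom n{|S|}\geq|\F|/\nchn$ coming from the full-chain count, or the chain partition above --- into one honest antichain without surrendering the factor of roughly $2$ that appears between consecutive levels of the induction; for even $n$ there is no slack at all. Convexity must be used globally, not chain by chain: a convex family can meet many symmetric chains in singleton intervals while remaining large, so no fixed decomposition of $\B_n$ supplies a good enough chain partition, and naive selections fail --- taking the larger of $\A_0$ and $\A_1+\{n\}$ loses a factor near $2$, and ``take the middle set of each interval $\F\cap\mathcal C_i$'' returns comparable pairs (for instance $\F=\{\{1,2\},\{1,2,3\},\{1,2,4\}\}$ is convex, yet over the standard symmetric chain decomposition of $\B_4$ this rule outputs $\{1,2\}\subset\{1,2,4\}$). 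Identifying the right global exchange argument that controls these cross-comparabilities --- equivalently, the right coherent choice of selection ranks across all the $\mathcal C_i$ --- is the hard step.
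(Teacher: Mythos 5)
This statement is not something the paper proves: it is the Frankl--Akiyama conjecture, which the authors cite only as context (to show that convex families already occur in the literature). There is no proof in the paper to compare your attempt against, and as far as the paper is concerned the conjecture is open.

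Your write-up is, by your own explicit admission, not a proof. The pieces you do assert are correct: both $\F_0$ and $\F_1$ are convex in $\B_{n-1}$; the inequality $2\binom{n-1}{\lfloor (n-1)/2\rfloor}\ge\nchn$ holds with equality precisely when $n$ is even; restricting $\F$ to a symmetric chain yields a consecutive interval on that chain; and your $\B_4$ example shows that a fixed ``middle of each interval'' selection rule can output comparable sets. You also correctly diagnose the obstruction in the inductive step: after discarding from $\A_0$ the sets below $\A_1$ (or taking $\max(|\A_0|,|\A_1|)$) one loses up to a factor of $2$, and for even $n$ the inequality above is tight, so there is no slack to absorb the loss. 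But none of this closes the gap --- you never produce the antichain, you only explain why the obvious selections fail. That is the honest place to stop on an open problem, but it means the proposal establishes nothing beyond the trivial extremal case $\F=\B_n$, and the ``right global exchange argument'' you point to is exactly the unsolved content of the conjecture.
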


\comments{
\begin{lemma}\label{unrelated:closures:lemma}
If $\F_1,\F_2\subseteq\B_n$ are unrelated (any $A_1\in \F_1$ and $A_2\in \F_2$ are unrelated), then $\overline{\F_1}$ and $\overline{\F_2}$ are unrelated.
\end{lemma}

\begin{proof}
Assume by contradiction that there are $S_1\in \overline{\F_1}$ and $S_2\in \overline{\F_2}$ such that $S_1\subseteq S_2$. Then there exists by the definition of closure an $A_1\in \F_1$ and $B_2\in \F_2$ such that $A_1\subseteq S_1\subseteq S_2\subseteq B_2$, but then $A_1\subseteq B_2$.
\end{proof}
}

If two copies of $P$ are unrelated, then their closures must be unrelated as well.
Therefore, we are more interested in the size and structure of the closure of a copy of $P$ than of the copy of $P$ itself.
For a weak embedding $f$ of $P$ into $\B_k$, there exists a minimum value of $\abs{\overline{f(P)}}$ over all choices of $f$ and $k$. Denote this minimum as $c(P)$.

Here are some examples. If $P$ is the three-element poset $\V$, we have
that $f$ may be embed $\V$ into $\B_2$ such that $f(\V)=\{\emptyset,\{1\},\{2\}\}$.
Now $\overline{f(\V)}=f(\V)$ so $c(\V)=\abs{\V}=3$.
In the proof of Theorem \ref{chain:theorem}, the closure of an embedding of a chain $\Pa_k$ is the smallest interval in which it is enclosed, $\I_\Pa$ in the proof. The smallest size of this interval is $2^k$ so $c(\Pa_k)=2^k$.


Here is one of the two main theorems, finding $\pa(n,P)$ asymptotically for any $P$ in terms of $c(P)$ and $\abs P$.
\begin{theorem}
\label{main:one}
For any poset $P$, as $n\to\infty$, $\pa(n,P)\sim\dfrac{\abs P}{c(P)}\nchn$.
\end{theorem}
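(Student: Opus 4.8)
The plan is to follow the template of the Sperner and Griggs--Stahl--Trotter arguments reproduced above: enlarge each copy of $P$ to a ``chain--efficient'' superfamily, namely its convex closure, count full chains of $\B_n$, and then match the bound with a construction.

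\emph{Upper bound.} Let $\F=\bigcup_{i=1}^N\F_i$ be an extremal family, so that $\pa(n,P)=\sum_i|\F_i|=N|P|$ (a copy of $P$ consists of $|P|$ sets). Replace each $\F_i$ by its closure $\G_i=\overline{\F_i}$. A quick check, as in the (commented) lemma above, shows that unrelated families have unrelated closures, so no full chain of $\B_n$ meets two of the $\G_i$; hence $\sum_{i=1}^N\#\{\text{full chains meeting }\G_i\}\le n!$. Everything then rests on the estimate: for \emph{every} copy $f(P)$ in $\B_n$,
\[
\#\{\text{full chains of }\B_n\text{ meeting }\overline{f(P)}\}\ \ge\ (1-o(1))\,c(P)\,\frac{n!}{\nchn},
\]
with an error term not depending on the copy. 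Granting this, $N\le(1+o(1))\nchn/c(P)$, and hence $\pa(n,P)=N|P|\le(1+o(1))\tfrac{|P|}{c(P)}\nchn$.

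\emph{Proof of the estimate.} Pass to the core: with $m=\bigcap\overline{f(P)}$ and $M=\bigcup\overline{f(P)}$ one has $\overline{f(P)}\subseteq[m,M]$, and identifying $[m,M]$ with $\B_r$, where $r=|M\setminus m|$, carries $\overline{f(P)}$ to a convex family $\G'\subseteq\B_r$ that is itself the closure of a copy of $P$ in $\B_r$; in particular $|\G'|\ge c(P)$. A full chain of $\B_n$ meets $\overline{f(P)}$ precisely when, during the stretch in which its running set contains $m$ and avoids $[n]\setminus M$, its trace on $M\setminus m$ passes through $\G'$; counting such orderings of $[n]$ and optimizing over $|m|$ gives, when $r$ is small compared with $n$, a bound of at least $(1-o(1))|\G'|\cdot n!/\nchn$. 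The extremal case is a closure centered about level $n/2$, and one checks (as for intervals and for single antichains) that off--center placements only increase the count, the $o(1)$ being uniform because the binomial asymptotics $\binom{n-a}{\lfloor(n-a)/2\rfloor}\sim 2^{-a}\nchn$ are. When $r$ is large, $\overline{f(P)}$ contains, by convexity, a large sub--interval of $\B_n$ (or $f(P)$ contains a large antichain), and a crude count already yields far more than $c(P)\,n!/\nchn$ chains. The delicate part here is obtaining the clean $(1-o(1))$ \emph{uniformly}: a one--level count loses a $\sqrt{r}$ factor, so for closures spread over many levels one must account for chains through every level the closure occupies.

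\emph{Lower bound.} We generalize the Griggs--Stahl--Trotter construction. Choose an optimal embedding $g:P\to\B_r$, i.e.\ one with $|\overline{g(P)}|=c(P)$, and (restricting to its core) arrange $\bigcap\overline{g(P)}=\emptyset$, $\bigcup\overline{g(P)}=[r]$; set $\G'=\overline{g(P)}$. Reserving a fixed $r$--set $S\subseteq[n]$ and taking, for each base set $A\subseteq[n]\setminus S$ of one fixed size, the translated copy $\{A\cup g(x):x\in P\}$ with closure $A+\G'$, produces pairwise--unrelated copies, since two equal--sized base sets are incomparable. Choosing the base size so that these closures straddle level $n/2$ makes each meet $\sim c(P)\,n!/\nchn$ full chains; arranging the copies so that their closures cover a $(1-o(1))$--fraction of all full chains, each at most once, then gives $(1-o(1))\nchn/c(P)$ copies, hence a family of size $(1-o(1))\tfrac{|P|}{c(P)}\nchn$. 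The subtle point, and I expect the main obstacle together with the uniform estimate above, is reaching the constant exactly $1/c(P)$ rather than the $1/2^r$ that the naive ``one reserved core, one middle level of base sets'' construction yields: this is tight when $\G'$ fills its core (as for a chain) but wasteful when $\G'$ is flat (as for an antichain, where one should instead subdivide levels of $\B_n$ directly), so closing the gap requires exploiting the shape of the optimal closure.
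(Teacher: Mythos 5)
Your high-level plan (enlarge each copy to its convex closure, count full chains through closures, then construct) matches the paper exactly, and your framing of the problem as ``unrelated copies have unrelated closures, closures have at least $c(P)$ sets, so count chains'' is the right reduction. However, both of the steps you flag as delicate are left unproved, and they are precisely where the paper's actual work lies.

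For the upper bound, everything hinges on the estimate that any family of $m$ subsets of $[n]$ is met by at least $(1-o(1))\,m\,n!\big/\nchn$ full chains, uniformly in the family. You acknowledge the difficulty (``obtaining the clean $(1-o(1))$ uniformly'') but do not resolve it. Your sketch via ``passing to the core'' and examining the trace in $[m,M]\cong\B_r$ does not circumvent the issue, since a full chain can meet the closure at several sets and these overlaps must be subtracted off; you still need an inclusion--exclusion bound in which the higher-order terms are shown to be negligible. That is exactly what the paper does in its Proposition: it writes $a(\F)$ by inclusion--exclusion over chains through subsets $A_{i_1},\dots,A_{i_k}$, assigns each term with $k\ge2$ to the singleton $b(i_1)$ or $b(i_k)$ corresponding to the set whose size is farthest from $n/2$, and shows each such term is at most $b(j)/(n/2)$, giving $a(\F)\ge\sum_j b(j)(1-2^m/n)$. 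Without this or an equivalent argument, your upper bound is asserted, not proved. Note also that the paper's Proposition makes no use of convexity or the shape of $\overline{f(P)}$: it simply selects any $m=c(P)$ members of the closure, which is cleaner than your trace-in-$\B_r$ reduction.

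For the lower bound the gap is even larger. You correctly observe that fixing one reserved core $S$ of size $r$ and one base level yields only $\sim\tfrac{1}{2^r}\nchn$ copies, which is weaker than $\tfrac{1}{c(P)}\nchn$ whenever $c(P)<2^r$, and you say closing the gap ``requires exploiting the shape of the optimal closure'' --- but you stop there. The paper's construction is genuinely iterative: after filling a middle layer with copies using base sets in $[n]\setminus[k]$ and restriction set $\emptyset$, it observes that for each $V\in U:=\B_k\setminus\overline{f(P)}$ there are still unused sets with trace $V$ on $[k]$, and it populates a fresh layer for each such $V$ (now using a new block $S_1=[2k]\setminus[k]$ for the copy itself), iterates, and chooses the base ranks of the $L$ layers according to a lexicographic word order on $U\cup\{E\}$ so that no two copies in distinct layers are comparable. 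Summing $\sum_{j\ge0}(2^k-m)^j/(2^k)^{j+1}=1/m$ is what produces the constant $1/c(P)$; your one-layer construction cannot reach it. The word-ordering of base ranks, which guarantees incomparability across layers, is an essential ingredient that does not appear in your proposal.

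In short: the outline is right, but the two key technical lemmas --- the uniform chain-count lower bound and the geometric-series layered construction --- are exactly what you have deferred, and without them the theorem is not proved.
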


We may also ask the similar question, what is the maximum number of pairwise unrelated induced copies of $P$ in $\B_n$, where each copy is a strong embedding of $P$?
A strong embedding $f$ of $P$ is such that for $a,b\in P$, $a<b$ \emph{if and only if} $f(a)\subset f(b)$.
We will denote the maximum size of a family in $\B_n$ constructed from induced copies of $P$ as $\pas(n,P)$.
We can also define the more general quantity $\pas(n,\{P_i\})$.

We can similarly define $c^*(P)$ as the minimum size of the closure of a strong embedding of $P$ in $\B_n$ over all possible $n$.
In general, $c^*(P)\ne c(P)$. Take for instance the poset $J=\{a,b,c,d\}$, $a<b<c$, and $a<d$; $J$ may be weakly embedded into $\B_2$ so $c(J)=4$.
As for $f$, a strong embedding of $J$ into $\B_k$, there exists a set $B^\prime\in\B_k$, $f(b)\ne B^\prime$, such that $f(a)\subset B^\prime\subset f(c)$ so $B^\prime\in\overline{f(J)}$, but $f(d)\ne B^\prime$ because $f(d)\nsubseteq f(c)$.
Therefore, $c^*(J)\geq 5$.
Also, a strong embedding of $J$ into $\B_3$ is easy to find such that $c^*(J)=5$.

The second main theorem finds $\pa^*(n,P)$ asymptotically for any $P$ in terms of $c^*(P)$ and $\abs P$.
\begin{theorem}
\label{main:two}
For any poset $P$, as $n\to\infty$, $\pas(n,P)\sim\dfrac{\abs P}{c^*(P)}\nchn$.
\end{theorem}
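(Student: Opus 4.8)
The plan is to prove Theorem~\ref{main:two} in exact parallel with Theorems~\ref{chain:theorem} and~\ref{main:one}, replacing the interval $\I_\Pa$ by the convex closure $\overline{\F_i}$ and $c(P)$ by $c^*(P)$ throughout. For the upper bound, I would take a family $\F=\bigcup_i\F_i$ built from pairwise unrelated \emph{induced} copies of $P$ attaining $\pas(n,P)$. Since each $\F_i$ is a strong embedding of $P$ into $\B_n$, the definition of $c^*$ gives $\abs{\overline{\F_i}}\ge c^*(P)$; since unrelated copies have unrelated closures, the families $\overline{\F_i}$ are pairwise unrelated, so no full chain meets two of them. The crux is then the following lemma, which I would prove first: \emph{if $\mathcal C\subseteq\B_n$ is convex with $\abs{\mathcal C}=c$, then the number of full chains meeting $\mathcal C$ is at least $(1-o(1))\,c\cdot\lfloor n/2\rfloor!\,\lceil n/2\rceil! = (1-o(1))\,c\,n!/\nchn$, where $o(1)\to0$ as $n\to\infty$ for fixed $c$.} Granting this (together with the easy observation that a convex family of size exceeding $c^*(P)$ contains a convex subfamily of size exactly $c^*(P)$, obtained by repeatedly discarding a maximal element, and that more sets can only be met by more chains), each $\overline{\F_i}$ meets at least $(1-o(1))\,c^*(P)\,n!/\nchn$ full chains; counting full chains as in Theorem~\ref{chain:theorem} yields $\frac{\pas(n,P)}{\abs P}\cdot(1-o(1))\,c^*(P)\,\frac{n!}{\nchn}\le n!$, hence $\pas(n,P)\le(1+o(1))\frac{\abs P}{c^*(P)}\nchn$.

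To prove the lemma I would view a uniformly random full chain $\Gamma$ as a random permutation and apply Bonferroni: $\Pr[\Gamma\text{ meets }\mathcal C]\ge\sum_{S\in\mathcal C}\Pr[S\in\Gamma]-\sum_{\{S,S'\}\subseteq\mathcal C}\Pr[S,S'\in\Gamma]$. The first sum is exactly $\sum_{S\in\mathcal C}\binom{n}{\abs S}^{-1}\ge c/\nchn$. In the second sum $\Pr[S,S'\in\Gamma]=0$ unless $S\subseteq S'$, in which case it equals $\binom{n}{\abs S}^{-1}\binom{n-\abs S}{\abs{S'}-\abs S}^{-1}$. If every member of $\mathcal C$ has size at most $n-n^{2/3}$, then $\binom{n-\abs S}{\abs{S'}-\abs S}\ge n-\abs S\ge n^{2/3}$, so the second sum is at most $(c-1)n^{-2/3}$ times the first, and the bound follows. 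If instead some $S_0\in\mathcal C$ has $\abs{S_0}>n-n^{2/3}$, then $\Pr[\Gamma\text{ meets }\mathcal C]\ge\binom{n}{\abs{S_0}}^{-1}\ge\binom{n}{\lfloor n^{2/3}\rfloor}^{-1}$, which is $\omega(c/\nchn)$; so the lemma holds in all cases. (The exponent $2/3$ is arbitrary in $(0,1)$.)

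For the lower bound I would fix a strong embedding $g\colon P\to\B_m$ with $\abs{\overline{g(P)}}=c^*(P)$ (which exists by the definition of $c^*$), set $\mathcal G=\overline{g(P)}$, and construct $(1-o(1))\,\nchn/c^*(P)$ pairwise unrelated induced copies of $P$ in $\B_n$, each realized inside a sublattice of $\B_n$ isomorphic to $\B_m$ so that it is automatically induced (a subcube embedding composed with the strong embedding $g$ is again strong), placed throughout a band of ranks around $\lfloor n/2\rfloor$ in the spirit of the construction in Theorem~\ref{chain:theorem}. The hard part is precisely here: the naive construction that fixes an $m$-element control set and lets the base range over a middle level yields only $\sim\nchn/2^m$ copies, and $c^*(P)$ can be strictly smaller than $2^m$ (already $c^*(\V)=3$), so one must arrange the translates of $\mathcal G$ in a design-like pattern that keeps them pairwise unrelated while collectively meeting almost every full chain, so that the count rises to $\sim\nchn/c^*(P)$. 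Once such a family of that many copies is in hand, each of size $\abs P$, we get $\pas(n,P)\ge(1-o(1))\frac{\abs P}{c^*(P)}\nchn$, and with the upper bound this gives the theorem. The only genuinely new ingredients relative to Theorem~\ref{main:one} are checking that the constructed copies are induced and using $\abs{\overline{\F_i}}\ge c^*(P)$ in place of $\ge c(P)$; as in Theorem~\ref{main:one}, the main obstacle is engineering the extremal construction.
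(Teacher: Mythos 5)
Your plan coincides structurally with the paper's: the paper proves Theorem~\ref{main:two} by literally repeating the argument for Theorem~\ref{main:one}, replacing the weak embedding $f$ with a strong one attaining $|\overline{f(P)}|=c^*(P)$ and noting that interval (subcube) embeddings preserve strong embeddings, so the layered construction of Section~\ref{sec:LB} produces induced copies unchanged. Your lower bound correctly identifies and defers to exactly that reduction, which is what the paper does. Where you genuinely depart is in the chain-counting lemma. The paper's Proposition~\ref{thm:amn} expands the count of full chains meeting a family $\F$ of size $m$ by full inclusion--exclusion and then assigns each higher-order term $b(i_1,\dots,i_k)$, $k\ge 2$, to a carefully chosen singleton $b(j)$ (the endpoint farther from $n/2$) to control the error; you instead truncate at the second Bonferroni level, bounding $\Pr[S,S'\in\Gamma]=\Pr[S\in\Gamma]\big/\binom{n-|S|}{|S'|-|S|}\le\Pr[S\in\Gamma]\,n^{-2/3}$ when all members have size at most $n-n^{2/3}$, and handling the complementary case by the crude bound $\binom{n}{\lfloor n^{2/3}\rfloor}^{-1}=\omega\bigl(c\big/\nchn\bigr)$. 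This is a correct and somewhat shorter route to the same estimate. Two small remarks: your lemma's convexity hypothesis is unused (the Bonferroni estimate works for any family of fixed size $c$, which is precisely what the paper's Proposition~\ref{thm:amn} asserts), and your observation that a convex family of size at least $c^*(P)$ has a convex subfamily of size exactly $c^*(P)$ is likewise unnecessary once this is noted, since chains meeting any subfamily are a subset of chains meeting the whole closure.
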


While preparing this manuscript, we learned that this problem of determining asymptotically
the maximum number of unrelated copies of a poset $P$ in $\B_n$ was already proposed by
Katona at a conference lecture in 2010~\cite{Kat10}.  We also learned that
Katona and Nagy~\cite{KatNag} have recently (and independently) obtained
results essentially equivalent to our two main results above.  
Our extension of the problem to a family of posets, $\pa(n,\{P_i\})$, appears to be new.

The following two sections are a proof of Theorem~\ref{main:one}.
The proof of Theorem~\ref{main:two} will require only a few alterations.
This will be demonstrated after the main proof.


\section{The Upper Bound}\label{sec:UB}

We obtain the upper bound on the number of unrelated copies of
poset $P$ from an asymptotic lower bound on the number of full
chains that meet the \emph{closure} of a copy of $P$.  For a family $\F$ of subsets of
$[n]$, let $a(\F)$ be the number of full chains in $\B_n$ that
intersect $\F$.   While $a(\F)$ will be as large as $n!$, if, say,
$\F$ contains $\emptyset$, we are interested in how small it can
get.  If $\F$ consists of $m$ subsets of size $k$, then $a(\F)$
will be $m k! (n-k)!=m(n!/{\binom{n}{k}})$, which is at least
$m\Big(n!\big/\nchn\Big)$.  For fixed $m$, as $n$ grows we expect this last
formula to be the minimum asymptotically.  Let us denote by
$\overline a(m,n)$ the minimum of $a(\F)$, over all families $\F\subseteq
\B_n$ with $|\F|=m$.

\begin{proposition}\label{thm:amn}
Let integer $m\ge1$.  Then as $n\rightarrow \infty$ the minimum
number of full chains in $\B_n$ that meet a family of $m$ subsets
in $\B_n$, $\overline a(m,n)\sim m\Big(n!\big/\nchn\Big)$.
\end{proposition}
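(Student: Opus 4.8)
The plan is to establish the lower bound $\overline a(m,n) \gtrsim m\big(n!/\nchn\big)$, since the matching upper bound is witnessed by taking $\F$ to be any $m$ subsets all of size $\lfloor n/2\rfloor$, which meets exactly $m\lfloor n/2\rfloor!\lceil n/2\rceil! = m\big(n!/\nchn\big)$ full chains. So the real content is: any family of $m$ subsets of $[n]$ meets at least $(1-o(1))\,m\,n!/\nchn$ full chains. The natural tool is a Lubell-type (LYM) counting argument. Recall that a uniformly random full chain passes through a given set $S$ of size $k$ with probability $1/\binom{n}{k}$. By inclusion–exclusion or just a union bound, the probability that a random full chain meets $\F=\{S_1,\dots,S_m\}$ is at most $\sum_{i} 1/\binom{n}{|S_i|} \le m/\nchn$, but we want a matching \emph{lower} bound on this probability, i.e. we want the events ``chain hits $S_i$'' to be nearly disjoint.

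First I would reduce to the case where $\F$ is an antichain. If $\F$ is not an antichain, replace any comparable pair $S_i \subsetneq S_j$ by... actually the cleaner reduction is: every full chain meeting $\F$ meets the antichain of maximal elements of $\F$ \emph{or} passes through a chain of $\F$; more carefully, I would argue that it suffices to lower-bound $a(\F)$ when $\F$ is an antichain, because a general $\F$ contains an antichain $\F'$ with $|\F'|\ge$ (something), but that loses too much. Instead: the key observation is that for the \emph{lower} bound we may assume all sets of $\F$ have size as close to $n/2$ as possible only in the extremal configuration; in general we must handle sets far from the middle, but those meet \emph{more} full chains, not fewer (a $k$-set meets $n!/\binom nk \ge n!/\nchn$ chains), so they only help. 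Thus the only danger is \emph{overcounting}: two sets $S_i, S_j$ of $\F$ might be met by many common full chains. So the heart of the matter is to bound the overlap $\sum_{i<j} (\text{number of full chains meeting both } S_i \text{ and } S_j)$ and show it is $o(m \cdot n!/\nchn)$ — but this is false in general (take $\F$ a full chain, where every chain meets all $m$ sets). The resolution must be that when $\F$ is a chain it meets $n!$ full chains, which is enormous compared to $m\,n!/\nchn$, so the bound holds trivially there; the tension between overlap and size is exactly what needs to be quantified.

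The argument I would carry out: let $\ell$ be the size of the longest chain contained in $\F$, and split $\F$ into antichains $\F = A_1 \cup \dots \cup A_\ell$ by rank-within-$\F$ (Mirsky's theorem), with $|A_1|+\dots+|A_\ell| = m$. A full chain that meets $\F$ meets \emph{some} $A_t$, so $a(\F) \ge \max_t a(A_t)$; but I want a sum, so instead use that a random full chain meets $A_t$ with probability $\ge \sum_{S\in A_t}1/\binom{n}{|S|} - \sum_{S\ne S'\in A_t}(\text{common chains})/n!$, and since $A_t$ is an antichain no full chain meets two of its members — wait, that's false too (two incomparable sets can lie on a common full chain). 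Hmm. The correct statement: a full chain is itself a maximal chain, so it meets a set of $\F$ for each \emph{rank} it has a member of $\F$ at; two members of $\F$ on one full chain are comparable. So if $\F$ is an antichain, every full chain meets \emph{at most one} member of $\F$, giving $a(\F) = \sum_{S\in\F} n!/\binom{n}{|S|} \ge m\,n!/\nchn$ exactly, with no error term. So for antichains the proposition is immediate and even exact (up to the direction of the binomial inequality). For general $\F$, decompose $\F$ into $\ell$ antichains $A_1,\dots,A_\ell$; then $a(\F) \ge a(A_{t^*})$ where $|A_{t^*}| \ge m/\ell$, giving $a(\F) \ge (m/\ell)\,n!/\nchn$. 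This is off by a factor $\ell$. To recover the full bound, observe that we may also pick \emph{disjoint} families of full chains for the different $A_t$'s: route the chain through a chosen representative, using the freedom in the remaining coordinates — this is where I'd do the real work, showing $a(\F) \ge (1-o(1))\sum_{S\in\F} n!/\binom{n}{|S|}$ by constructing, for each $S\in\F$, a set of $\binom{n}{|S|}^{-1}n! \cdot(1-o(1))$ full chains through $S$ that avoid all other elements of $\F$, or by a direct second-moment / inclusion–exclusion estimate showing the pairwise overlaps are negligible when $m$ is fixed and $n\to\infty$. I expect this last step — controlling the pairwise intersections uniformly over all $\F$ of size $m$ — to be the main obstacle, and I'd attack it by noting that for fixed sets $A \ne B$, the number of full chains through both is either $0$ (if incomparable) or $n!/[\binom{n}{|A|}\binom{|B|}{|A|}$-type product$]$ (if comparable), which is a vanishing fraction of $n!/\binom{n}{|A|}$ as $n\to\infty$ since it carries an extra factor like $\binom{|B\setminus A|+|A\setminus\cdot|}{\cdot}^{-1}$ — more precisely, since $m$ is fixed, there are only $O(m^2)$ pairs and each overlap is $o(n!/\nchn)$, so inclusion–exclusion closes.
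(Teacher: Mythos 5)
You correctly locate the heart of the matter: the danger is overcounting when several sets of $\F$ lie on a common full chain, and the naive union bound fails exactly because (as you note) a chain-shaped $\F$ has massive overlap. But your proposed resolution does not work. Your closing claim --- that for fixed $m$ each pairwise overlap is $o(n!/\nchn)$ so inclusion--exclusion closes --- is false. Take $\F$ to contain $A\subset B$ with $|A|=1$, $|B|=2$: the number of full chains through both is $1!\cdot 1!\cdot(n-2)!=(n-2)!$, and the ratio of this to $n!/\nchn=\lfloor n/2\rfloor!\lceil n/2\rceil!$ is roughly $2^n/n^{5/2}\to\infty$. Even the weaker version of your claim, that the overlap is a vanishing fraction of $b(A)=n!/\binom{n}{|A|}$, fails near the top of the cube: for $A\subset B$ with $|A|=n-2$, $|B|=n-1$, the overlap $(n-2)!$ is a constant fraction ($1/2$) of $b(A)=2(n-2)!$. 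So the error term in inclusion--exclusion is not automatically negligible; it must be charged to the \emph{right} singleton term.

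That re-assignment is exactly the idea you are missing, and it is the crux of the paper's proof. The paper writes $a(\F)$ by inclusion--exclusion in the terms $b(i_1,\dots,i_k)$, makes all signs negative for $k\ge2$ to get a lower bound, and then assigns each such term to \emph{either} $b(i_1)$ \emph{or} $b(i_k)$ (the singleton term for the smallest or largest set in that sub-chain), choosing whichever of $|A_{i_1}|$, $|A_{i_k}|$ is \emph{farther} from $n/2$. With that choice the assigned term is at most $b(j)/(n/2)$ --- because the farther-from-middle endpoint guarantees a binomial factor $\binom{n-u}{\cdot}$ or $\binom{v}{\cdot}$ of size at least $n/2$ in the denominator --- and since at most $2^{m-1}$ terms land on each $b(j)$, the total subtraction is at most $b(j)\cdot 2^m/n$. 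This gives $a(\F)\ge(1-2^m/n)\sum_j b(j)\ge(1-2^m/n)\,m\,n!/\nchn$ uniformly, which is what you want. In your examples above, this assigns the $A\subset B$ overlap to the endpoint farther from $n/2$ ($|A|=1$ in the first case, $|B|=n-1$ in the second), and in both cases the ratio becomes $O(1/n)$. Your Mirsky decomposition and your observation that an antichain already gives the exact bound are both correct but are a detour; the direct inclusion--exclusion with this assignment rule is what closes the argument.
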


\begin{proof}
Let $\F=\{A_1,\ldots,A_m\}$ be a family of $m$ subsets of $[n]$.
For convenience let us assume that the subsets are labeled so
that for all $i<j$, $|A_i|\le |A_j|$.   For any $1\le
i_1<\cdots,i_k\le m$ let $b(i_1,\ldots,i_k)$ denote the number of
full chains that pass through all of $A_{i_1},\ldots,A_{i_k}$. Of
course, $b(i_1,\ldots,i_k)$ is nonzero if and only if the sets
$A_{i_1},\ldots,A_{i_k}$ form a chain.
Inclusion-exclusion gives us that $a(\F)$ is the sum of the
$b(i_1)$ minus the sum of the $b(i_1,i_2)$ plus the sum of the
$b(i_1,i_2,i_3)$ minus and so on. Our difficulty now is that some
terms $b(i_1,\ldots,i_k)$ with $k\ge2$ can actually be large
compared to some singleton terms $b(i_1)$, so we cannot
immediately dismiss them.  For instance, if $n=100$ and $\F$
happens to be a chain with $|A_i|=i$ for all $i$, then
$b(1,2)=1!1!98!$ is much larger than $b(50)=50!50!$. However, we
can exploit the fact that terms $b(i_1,\ldots,i_k)$ with $k\ge2$
are considerably smaller than some $b(i_1)$ terms. In the example,
we could instead compare $b(1,2)$ to $b(1)=1!99!$.

By making all signs for terms with $k\ge2$ negative, our
alternating sum lower bound above is at least the sum of the
$b(i_1)$ minus the sum over all $k\ge2$ of the terms
$b(i_1,\ldots,i_k)$. For the $2^m-m-1$ terms being subtracted, we
assign each one to a particular positive singleton term $b(j)$ as
follows:  For a term $b(i_1,\ldots,i_k)$ with $k\ge2$, by our
labeling we have $|A_{i_1}|\le\cdots\le |A_{i_k}|$.  Let
$u:=|A_{i_1}|$ and $v:=|A_{i_k}|$.  We assign this term to one of
$b(i_1)$ or $b(i_k)$, resp., according to whether $|u-(n/2)|$ is
at least (less than, resp.) $|v-(n/2)|$. For instance in the
example above, the terms $b(20,28)$ and $b(20,30,80)$ are assigned
to $b(20)$, while $b(20,30,81)$ is assigned to $b(81)$.

We have then each singleton term $b(j)=|A_j|!(n-|A_j|)!$.  There
are less than $2^{m-1}$ terms $b(i_1,\ldots,i_k)$ with $k\ge2$
assigned to $b(j)$.  For those terms that are nonzero, it means
that $A_{i_1}\subset \cdots\subset A_{i_k}$ and either $i_1$ or
$i_k$ is $j$, according to which is farther from $n/2$.  Suppose
$j=i_1$ (so $i_1<n/2$).  Then this term $b(i_1,\ldots,i_k)$ is a
product of factorials that refines $b(i_1)$:  While $i_1!$ is
still a factor, $(n-i_1)!$ is replaced by a product of factorials
no more than $1!(n-i_1-1)$, so in total, we get at most $b(i_1)$
divided by $(n-i_1)$, which is at least $n/2$.  In this case, and
similarly when $j=i_k$, we see that the term $b(i_1,\ldots,i_k)$
is at most $b(j)$ divided by $n/2$.  Therefore, the sum of all the
terms assigned to $b(j)$ is at most $b(j)$ times $2^m/n$.  Hence,
$$a(\F)\ge \sum_{j=1}^m b(j) (1-(2^m/n)). $$ Since each term
$b(j)=j!(n-j)!\ge n!\big/\nchn$, and this bound holds independent of
$\F$, we see that as $n\rightarrow\infty$ for fixed $m$,
$\overline a(m,n)\sim m\Big(n!\big/\nchn\Big)$.
\end{proof}

Now we consider our poset packing problem.  Assume that we have
$\pa(n,P)/\abs P$ unrelated copies $F_i$ of our poset $P$ contained in the
Boolean lattice $\B_n$.  In fact, if a full chain passes through
the closure $\overline{F_i}$ of one of these families
$F_i$, it does not pass through the closure of any other $F_j$,
since $F_i$ and $F_j$ are unrelated.   That is, the closures
$\overline{F_i}$ are also unrelated.  Each closure
$\overline{F_i}$ has at least $m=c(P)$ subsets in it so it meets at
least $\overline a(m,n)$ full chains.

Altogether, the number of full chains that meet some closure
$\overline{F_i}$ is then at least $\overline a(m,n)\pa(n,P)/\abs P$. This is
in turn at most the total number of full chains, $n!$.  Hence,
$\pa(n,P)/\abs P$ is at most $n!/\overline a(m,n)$, which is asymptotic to
$(1/m)\nchn$ for large $n$. This gives the desired asymptotic upper bound.


\comments{
For $\F\subseteq\B_n$, let $c(F)$ denote the set of maximum chains in $\B_n$ that hit the collection of sets $\F$. Let $$c(m,n):=\min_{F\in\B_n}\{\abs{c(F)} : \abs F=m\}.$$

\begin{lemma}
\label{MinNumChainsLemma}
Fix $m$. $c(m,n)\geq\big[m-O(\frac1n)\big](\lfloor\frac n2\rfloor)!(\lceil\frac n2\rceil)!\sim m\dfrac{n!}{\nchn}$.
\end{lemma}

\begin{proof}
Let $\F\subseteq \B_n$, $\abs F=m$, be a family that witnesses $c(m,n)$. We may assume that neither $\emptyset$ nor $[n]$ are in $\F$, else $n!$ full chains will hit $\F$. Using the ideas of inclusion/exclusion, sum over different values of $k$, where a maximum chain hits $\F$ at least $k$ times:
\begin{align*}
\abs{c(F)}&=\sum_{k=1}^{m}(-1)^{k-1}\sum_{\mathclap{\substack{p_1,p_2,\dots,p_k\in F\\p_1\subseteq p_2\subseteq\dots\subseteq p_k}}}\abs{\cap_{i=1}^{k} c(p_i)}
\\&=\sum_{p\in F}\abs{c(p)}-\sum_{k=2}^{m}(-1)^{k}\sum_{\mathclap{\substack{p_1,p_2,\dots,p_k\in F\\p_1\subseteq p_2\subseteq\dots\subseteq p_k}}}\abs{\cap_{i=1}^{k} c(p_i)}
\\&\geq\sum_{p\in F}\abs{c(p)}-\sum_{\mathclap{\substack{k\geq2\\p_1,p_2,\dots,p_k\in F\\p_1\subseteq p_2\subseteq\dots\subseteq p_k}}}\abs{\cap_{i=1}^{k} c(p_i)}
\\&=\sum_{\substack{p\in F\\ \abs p \leq n/2}}\bigg(\abs{c(p)}-\sum_{\mathclap{\substack{k\geq2\\p_2,p_3,\dots,p_k\in F\\p=p_1\subseteq p_2\subseteq\dots\subseteq p_k}}}\abs{\cap_{i=1}^{k} c(p_i)}\bigg)
+\sum_{\substack{p\in F\\ \abs p > n/2}}\bigg(\abs{c(p)}-\sum_{\mathclap{\substack{k\geq2\\p_1,p_2,\dots,p_{k-1}\in F\\ \abs{p_1}> n/2 \\p_1\subseteq p_2\subseteq\dots\subseteq p_k=p}}}\abs{\cap_{i=1}^{k} c(p_i)}\bigg)
\end{align*}
In this last step, for each $k$-chain term $p_1\subseteq p_2\subseteq\dots\subseteq p_k$, we associate this term with its minimum $p_1$, if $\abs{p_1}\leq n/2$, or with its maximum $p_k$ otherwise.
\begin{align*}
\abs{c(F)}&\geq\sum_{\substack{p\in F\\ \abs p \leq n/2}}\abs{c(p)}\bigg(1-\sum_{\mathclap{\substack{k\geq2\\p_2,p_3,\dots,p_k\in F\\p=p_1\subseteq p_2\subseteq\dots\subseteq p_k}}}\frac{\abs{\cap_{i=1}^{k} c(p_i)}}{\abs{c(p)}}\bigg)
+\sum_{\substack{p\in F\\ \abs p > n/2}}\abs{c(p)}\bigg(1-\sum_{\mathclap{\substack{k\geq2\\p_1,p_2,\dots,p_{k-1}\in F\\ \abs{p_1}> n/2 \\p_1\subseteq p_2\subseteq\dots\subseteq p_k=p}}}\frac{\abs{\cap_{i=1}^{k} c(p_i)}}{\abs{c(p)}}\bigg).
\end{align*}
Notice when $\abs{p_1}\leq n/2$, $p=p_1$ and
\begin{align*}
\frac{\abs{\cap_{i=1}^{k} c(p_i)}}{\abs{c(p_1)}}&=\frac{\abs{p_1}!(\abs{p_2}-\abs{p_1})!(\abs{p_3}-\abs{p_2})!\dots(\abs{p_k}-\abs{p_{k-1}})!(n-\abs{p_k})!}{\abs{p_1}!(n-\abs{p_1})!}
\\&=\frac{(\abs{p_2}-\abs{p_1})!(\abs{p_3}-\abs{p_2})!\dots(\abs{p_k}-\abs{p_{k-1}})!(n-\abs{p_k})!}{(n-\abs{p_1})!}
\\&\leq \frac{(\abs{p_2}-\abs{p_1})!(n-\abs{p_2})!}{(n-\abs{p_1})!}=\binom{n-\abs{p_1}}{\abs{p_2}-\abs{p_1}}^{-1}
\\&\leq \frac{1}{n-\abs{p_1}}\text{, remember that }p_2\neq[n],
\\&\leq \frac{1}{n-n/2}=\frac{2}{n}.
\end{align*}
Also notice that when $\abs{p_1}>n/2$, $p=p_k$ and
\begin{align*}
\frac{\abs{\cap_{i=1}^{k} c(p_i)}}{\abs{c(p_k)}}&=\frac{\abs{p_1}!(\abs{p_2}-\abs{p_1})!(\abs{p_3}-\abs{p_2})!\dots(\abs{p_k}-\abs{p_{k-1}})!(n-\abs{p_k})!}{\abs{p_k}!(\abs{p_k})!}
\\&=\frac{\abs{p_1}!(\abs{p_2}-\abs{p_1})!(\abs{p_3}-\abs{p_2})!\dots(\abs{p_k}-\abs{p_{k-1}})!}{\abs{p_k}!}
\\&\leq\frac{\abs{p_1}!(\abs{p_k}-\abs{p_{1}})!}{\abs{p_k}!}=\binom{\abs{p_k}}{\abs{p_1}}^{-1}
\\&\leq \frac{1}{\abs{p_k}}\text{, remember that }\abs{p_1}>n/2,
\\&\leq \frac{1}{n/2}=\frac{2}{n}.
\end{align*}
Now we have
\begin{align*}
\abs{c(F)}&\geq\sum_{\substack{p\in F\\ \abs p \leq n/2}}\abs{c(p)}\bigg(1-\sum_{\mathclap{\substack{k\geq2\\p_2,p_3,\dots,p_k\in F\\p=p_1\subseteq p_2\subseteq\dots\subseteq p_k}}}\frac{2}{n}\bigg)
+\sum_{\substack{p\in F\\ \abs p > n/2}}\abs{c(p)}\bigg(1-\sum_{\mathclap{\substack{k\geq2\\p_1,p_2,\dots,p_{k-1}\in F\\ \abs{p_1}> n/2 \\p_1\subseteq p_2\subseteq\dots\subseteq p_k=p}}}\frac{2}{n}\bigg)
\\&\geq\sum_{\substack{p\in F\\ \abs p \leq n/2}}\abs{c(p)}\bigg(1-2^{m-1}\frac{2}{n}\bigg)
+\sum_{\substack{p\in F\\ \abs p > n/2}}\abs{c(p)}\bigg(1-2^{m-1}\frac{2}{n}\bigg)
\\&=\bigg(1-\frac{2^{m}}{n}\bigg)\sum_{p\in F}\abs{c(p)}\geq\bigg(1-\frac{2^{m}}{n}\bigg)m(\lfloor n/2\rfloor)!(\lceil n/2\rceil)!.
\end{align*}
\end{proof}

\begin{proof}
($\leq$) Notice that from Lemma~\ref{unrelated:closures:lemma}, no full chain in $\B_n$ can hit two closures generated from two unrelated copies of $P$, else it would define a full chain that hits two copies of $P$. The idea of this proof is to have some $x$ value such that the closure of any copy of $P$ must be hit by at least $x$ full chains. In this way, no $\F$ can have more than $\dfrac{n!}{x}$ copies of $P$.

Any copy of $P$ in $\F$ will have a closure of size at least $m$, so by Lemma~\ref{MinNumChainsLemma}, the number of chains that hit this closure is at least $(m-O(1/n))(n/2)!(n/2)!$. Now,
$$\frac{\abs F}{\abs P}\leq\frac{n!}{(m-O(1/n))(n/2)!(n/2)!}\sim\dfrac{1}{m}\nchn.$$
}


\section{The Lower Bound Construction}\label{sec:LB}

Let $m$, $k$, and $f$ be such that $f$ embeds $P$ into $\B_k$, and $\abs{\overline{f(P)}}=m=c(P)$.
We will construct an $\F\subseteq\B_n$ from pairwise unrelated copies of $f(P)$ so that the number of copies of $P$ in $\F$ is $\dfrac{\abs\F}{\abs P}\sim\dfrac{1}{m}\nchn$.

We will construct $F$ through a finite number of iterations.
Fix an $i\in\mathbb N$.
This $i$ is the number of iterations for which we construct asymptotically $\dfrac{(2^k-m)^{j}}{(2^k)^{j+1}}\nchn$ unrelated copies of $P$ for each $0\leq j\leq i-1$.
Because we may choose $i$ to be arbitrarily large, we will have
\begin{align*}
\frac{\abs\F}{\abs P}&\sim\sum_{j=0}^{i-1}\bigg(\frac{(2^k-m)^{j}}{(2^k)^{j+1}}\bigg)\nchn\\
&\sim\sum_{j=0}^{\infty}\bigg(\frac{(2^k-m)^{j}}{(2^k)^{j+1}}\bigg)\nchn\\
&=\frac{1}{2^k}\bigg[\frac{1}{1-\frac{2^k-m}{2^k}}\bigg]\nchn=\frac{1}{m}\nchn.
\end{align*}

Let's now create such an $\F\subseteq\B_n$ for each $n$.
For the rest of the argument, let $(A+x)$ be the translation $\{a+x\mid a\in A\}$ for a set $A\subseteq[n]$ and an integer $x$.
For the ease of notation, define $S_j:=[k(j+1)]\setminus[kj]=\{kj+1,kj+2,\dots,kj+k\}=([k]+kj)$, the set $[k]$ translated by a multiple of $k$.

A level (or row) of $\B_n$ is all subsets of $[n]$ of the same size, the rank of the level.
The level of rank $r$ is often denoted as $\binom{[n]}{r}$.
Define a \emph{layer} of $\B_n$ (denoted as $\ell$) to be $k+1$ consecutive levels of $\B_n$.
We call the smallest rank in layer $\ell$ its base rank, $b_\ell$.
Specifically, $\ell=\binom{[n]}{b_\ell}\cup\binom{[n]}{b_\ell+1}\cup\dots\cup\binom{[n]}{b_\ell+k}$.
We define our layers by taking the base ranks to be $\floor{n/2}+z(k+1)$ for all integers $z$; in this way, we partition the levels of $\B_n$ and any two layers are disjoint.
We construct $\F$ by populating certain layers with many copies of $f(P)$.
A layer $\ell$ that is populated corresponds to a triple $(j_\ell,R_\ell,b_\ell)$; $\ell$ has base rank $b_\ell$, the iteration in which it is populated $j_\ell$ (ranges from 0 to $i-1$), and a restriction set $R_\ell\subseteq [kj_\ell]$, which defines which elements of $\ell$ are in $\F$.
The following is exactly how $\F$ is constructed in a layer $\ell$:
\begin{align*}
\ell\cap \F=\big\{R_\ell\cup A\cup B\ \big|
&A\subseteq S_{j_\ell},(A-kj_\ell)\in f(P),\\
&B\subseteq[n]\setminus[k(j_\ell+1)],\ \text{and}\\
&\abs{R_\ell}+\abs{B}=b_\ell\big\}.
\end{align*}
Our choice for the $R_\ell$ and the order of the $b_\ell$'s, as we will show later, prevents any two copies of $P$ in different layers from having any related sets.
For a fixed $B$, the family of all the $A$'s forms a copy of $P$ that is $f(P)$ translated, from using the elements in $[k]$ to using the elements from $S_{j_\ell}$.
There is then one copy of $P$ in $\ell$ for each choice of $B$.
The purpose of $B$ is to combine with $R_\ell$ to be in the base level of the layer, i.e., $B\cup R_\ell\in\binom{[n]}{b_\ell}$.
There are $\dbinom{n-k(j_\ell+1)}{b_\ell-\abs{R_\ell}}$ choices for $B$.
Notice that copies of $P$ within a layer are unrelated; every set in a copy of $P$ has the same base set $R_\ell\cup B$, and the copies of $P$ in a layer have unrelated base sets.

For each iteration $j$, we will be populating $(2^k-m)^{j}$ layers.
This gives a total of only $L:=1$ populated layers if $2^k-m=1$, or $L:=\sum_{j=0}^{i-1}(2^k-m)^{j}=\dfrac{(2^k-m)^{i}-1}{(2^k-m)-1}$ populated layers otherwise.
The order of the $b_\ell$'s of the populated layers is important in preventing any two copies of $P$ from being related, but as long as the order of the populated layers is maintained, the $b_\ell$'s for the populated layers may be chosen close to the middle level, i.e., $\abs{b_\ell-\floor{n/2}}\leq(k+1)L$, where $L$ is a constant defined above that does not depend on $n$ and $k+1$ is the number of levels in each layer.
Each layer has $\dbinom{n-k(j_\ell+1)}{b_\ell-\abs{R_\ell}}$ copies of $P$.
This is now asymptotic to $\dfrac{1}{(2^k)^{j_\ell+1}}\nchn$ copies since $b_\ell-\abs{R_\ell}$ is at most a fixed, finite distance from $n/2$.
This results in our desired number of copies of $P$,
$$\dfrac{\abs{\F}}{\abs P}\sim\sum_{j=0}^{i-1}\bigg(\dfrac{(2^k-m)^{j}}{(2^k)^{j+1}}\bigg)\nchn.$$
We will now demonstrate how each $R_\ell$ is chosen and in what order are the populated layers to ensure that the copies of $P$ are pairwise unrelated.

Let's start with $j=0$.
We start by populating one layer of $F$; let $\F\supseteq\{A\cup B\mid A\in f(P), B\in[n]\setminus[k],\abs B=\floor{n/2}\}$.
In other words, the layer $\ell$ with $b_\ell=\floor{n/2}$ is populated with $R_\ell=\emptyset$ and $j_\ell=0$.
Now $\abs{\F}\geq\abs{P}\dbinom{n-k}{\lfloor n/2\rfloor}$, which is asymptotically $\dfrac{1}{2^k}\nchn$ copies of $P$.
So if $m=2^k$, (i.e., $\overline{f(P)}=\B_k$,) we are done.
If not, we would like to add more copies of $P$ then just those in our middle layer so we will need to know which of the elements of $\B_n$ are available to include in the family; we consider which elements of $\B_n$ are unrelated to any element of this middle, populated layer.
Consider a set $B\in\B_n$ and $B_{[k]}:=B\cap [k]$ and $b:=\abs{B\setminus B_{[k]}}$.
This set $B$ is unrelated to all sets in $\F$ if and only if one of the following is true:
\begin{enumerate}
\item $B_{[k]}$ is unrelated to all sets in $f(P)$;
\item $B_{[k]}\nsubseteq C$ for all $C\in f(P)$ and $b<\floor{n/2}$; or
\item $B_{[k]}\nsupseteq C$ for all $C\in f(P)$ and $b>\floor{n/2}$.
\end{enumerate}
The choices for $B_{[k]}$ that can provide more sets to add to the family are exactly the sets $B_{[k]}\in \B_k\setminus \overline{f(P)}$.
In fact, each one of the $B_{[k]}\in \B_k\setminus \overline{f(P)}$ can lead to a distinct \emph{layer} of copies of $P$ by choosing the base levels correctly; the new layers are the layers from the second iteration (so would have $j_\ell=1$), and the layer's restriction set would be $B_{[k]}$.
The next step is identifying appropriate base levels for each new layer and then demonstrating how this process iterates.

Let's order the elements of $U:=\B_k\setminus \overline{f(P)}$. First, split $U$ into two sets, $U^+$ and $U^-$:
\begin{align*}
U^+:=&\B_k\setminus U(f(P))\\
=&\{V\in U\mid V\nsupseteq C\text{ for all }C\in \overline{f(P)}\},\text{ and}\\
U^-:=&U\setminus U^+\\
=&U(f(P))\setminus \overline{f(P)}\\
\subseteq&\{V\in U\mid V\nsubseteq C\text{ for all }C\in \overline{f(P)}\}.
\end{align*}
The set $U^+$ contains both the elements of $U$ contained in some element of $f(P)$ and the subsets of $[k]$ that are unrelated to any element of $f(P)$. On the other hand, $U^-$ contains the elements of $U$ containing some element of $f(P)$.
Let $\leq_U$ be any ordering of the elements in $U$ such that if $V_1\in U^-$ and $V_2\in U^+$, then $V_1\leq_U V_2$, else if $V_1\supseteq V_2$, then $V_1\leq_U V_2$.
We will use this ordering $\leq_U$ to order the base ranks to guarantee all copies of $P$ remain unrelated.

For $j=0$, we have the populated layer corresponding to $(0,\emptyset,\floor{n/2})$.
For $j=1$, we populate the layers corresponding to $(1,V,b_V)$ for each $V\in U$. We can choose the $b_V$'s such that if $V\in U^-$, then $b_V<\floor{n/2}$, and if $V\in U^+$, then $b_V>\floor{n/2}$, and if $V_1<_U V_2$, then $b_{V_1}<b_{V_2}$.
For an iteration $j>1$, for each layer corresponding to $(j-1,R,b)$ populated in iteration $j-1$, we can populate $2^k-m$ new layers, one for each set in $U$.
These new layers correspond to $\Big(j,R\cup\big(V+k(j-1)\big),b_\ell\Big)$ for each $V\in U$.
Inductively, there are then $(2^k-m)^{j}$ layers populated in iteration $j$, each with asymptotically $\dfrac{1}{(2^k)^{j+1}}\nchn$ copies of $P$, for a total of $\dfrac{(2^k-m)^{j}}{(2^k)^{j+1}}\nchn$ copies of $P$ associated with iteration $j$.
All that is left to prove is that we can put the layers in an appropriate order, i.e., the base ranks $(b_\ell)$ may be chosen in such a way as to prevent any two copies of $P$ from being related.

Let $(\ell_s)_{1\leq s\leq L}$ be the sequence of populated layers, $\ell_s$ corresponding to $(j_s,R_s,b_s)$, in the order of the rank of the base levels, i.e., for all $s_1<s_2$, $b_{s_1}<b_{s_2}$.
Let's consider our ordered set $U$ again.
Let's add to $U$ the character $E$ to indicate the `end' of a word.
Let $E$ be between $U^-$ and $U^+$ in $\leq_U$.
Consider words $V_0V_1\dots V_{j-1}E$, where the letters come from $U$, the words always end in $E$, and $E$ is only at the end of a word.
We only consider words of length $j+1$, where $0\leq j\leq i-1$.
There is a bijection between the layers $(\ell_s)$ and the possible words of length at most $i$.
Specifically, given a word $V_0V_1\dots V_{j-1}E$, its corresponding $j_s$ is $j$ and $R_s=\cup_{p=0}^{j-1}(V_p+kp)$.
Let $W$ be the set of all words of length at most $i$.
Order these words lexicographically using $\leq_U$.
Specifically, for any two words in $W$, $w_p=U_0\dots U_s$ and $w_q=V_0\dots V_t$, we say $w_1<w_2$ if and only if $U_i=V_i$ for $0\leq i\leq j-1$ and $U_j<_UV_j$ for some $j\geq0$.
Use this ordering of $W$ and the bijection between the words and the layers to directly define the corresponding ordering of the $(b_s)$.
Specifically, for two layers $\ell_1$ and $\ell_2$ with base level ranks $b_1$ and $b_2$ respectively, $b_1<b_2$ if and only if the word corresponding to $\ell_1$ is less than the word corresponding to $\ell_2$.


Now we show that no two copies of $P$ are related.
We have already seen that no two copies of $P$ in the same layer can be related.
For two copies of $P$, $P_p$ in layer $\ell_p$ (with base rank $b_p$) and $P_q$ in layer $\ell_q$ (with base rank $b_q$), consider their corresponding words, $w_p=U_0\dots U_s$ and $w_q=V_0\dots V_t$.
Without loss of generality, let $w_p<_U w_q$ so $b_p<b_q$.
Consider the subscript $c$ for the first character where $w_p$ and $w_q$ differ, i.e., $U_0\dots U_{c-1}=V_0\dots V_{c-1}$ and $U_c\neq V_c$, $U_c<_UV_c$.
Choose any representatives of the copies of $P$, $A_p\in P_p$ and $A_q\in P_q$, and define $B_p:=A_p\cap S_c$ and $B_q=A_q\cap S_c$.
Since $b_p<b_q$, we have that $A_q\nsubseteq A_p$; next we show that $A_p\nsubseteq A_q$.

The order of the words, and hence the order of the $b_\ell$'s, was chosen specifically to prevent any copies of $P$ from being pairwise related.
If $U_c=E$, then $(B_p-kc)\in f(P)$ and $V_c\in U^+$ so $V_c\nsupseteq C$ for all $C\in f(P)$, i.e., $(V_c+kc)= B_q\nsupseteq B_p$ for any $B_p$ such that $(B_p-kc)\in f(P)$.
But $B_q\nsupseteq B_p$ implies $A_q\nsupseteq A_p$.
For similar reasoning, if $V_c=E$, then $A_p\nsubseteq A_q$.
If neither $U_c=E$ nor $V_c=E$, then $U_c<V_c$ implies $U_c\nsubseteq V_c$, but $U_c=(B_p+kc)$ and $V_c=(B_q+kc)$ so $B_p\nsubseteq B_q$ so $A_p\nsubseteq A_q$.
Either way, no set from $P_p$ is related to any set from $P_q$.
This completes the proof of Theorem~\ref{main:one}.


\section{Concluding Remarks}\label{sec:REM}

We now explain how we may modify the proof above to prove Theorem~\ref{main:two}.
In proving the upper bound of Theorem~\ref{main:one}, we use the fact that the closure of a copy of $P$ meets at least $\overline a(c(P),n)$ full chains in $\B_n$.
For Theorem~\ref{main:two}, using only strong embeddings, we have that a copy of $P$ meets at least $\overline a(c^*(P),n)$, which similarly gives us the upper bound.
In the lower bound of Theorem~\ref{main:one}, we created a family $\F\subseteq\B_n$ constructed from multiple copies of $f(P)$, a weak embedding of $P$ into $\B_k$ such that $\overline{f(P)}=c(P)$.
If we instead take $f$ to be a strong embedding such that $\overline{f(P)}=c^*(P)$, then the same method of construction will achieve the asymptotic lower bound.

For a finite collection of posets, the quantities $\pa(n,\{P_1,\dots,P_k\})$ and $\pas(n,\{P_1,\dots,P_k\})$ may be found asymptotically as well. Specifically, as $n$ goes to infinity,
$$\pa(n,\{P_1,P_2,\dots,P_k\})\sim\max_{1\leq i\leq k}\Bigg(\frac{\abs{P_i}}{c(P_i)}\Bigg)\nchn,\ \text{and}$$
$$\pas(n,\{P_1,P_2,\dots,P_k\})\sim\max_{1\leq i\leq k}\Bigg(\frac{\abs{P_i}}{c^*(P_i)}\Bigg)\nchn.$$

As for future work, it would be nice to know how to find $c(P)$ and $c^*(P)$ quickly for any $P$, or at least to find the complexity of such an algorithm.
Also, in the examples in the introduction, the exact values for $\pa(n,P)$ are found, not just their asymptotic values.
It would be nice to have exact values for $\pa(n,P)$ and $\pas(n,P)$.


\section{Acknowledgment}\label{sec:ACK}

Discussions with Richard Anstee were valuable when we first formulated the packing problem and
solved it for $P=\V_2$.


\end{document}